\documentclass[11pt]{amsart}

\usepackage[a4paper,margin=1in]{geometry}
\usepackage{amsmath,amssymb,amsthm}
\usepackage[colorlinks=true,linkcolor=blue,citecolor=blue,urlcolor=blue]{hyperref}

\newtheorem{theorem}{Theorem}[section]
\newtheorem{proposition}[theorem]{Proposition}
\newtheorem{lemma}[theorem]{Lemma}
\theoremstyle{remark}
\newtheorem{remark}[theorem]{Remark}

\newcommand{\Ric}{\operatorname{Ric}}
\newcommand{\tr}{\operatorname{tr}}

\title[Counterexample to a sixth-order maximum principle]
{A counterexample to a strong maximum principle for the sixth-order GJMS operator}

\author{Liuwei Gong}
\address{Liuwei Gong\newline
Department of Mathematics, The Chinese University of Hong Kong,
Shatin, Hong Kong}
\email{lwgong@math.cuhk.edu.hk}

\author{Mingxiang Li}
\address{Mingxiang Li\newline
Department of Mathematics, The Chinese University of Hong Kong,
Shatin, Hong Kong}
\email{limx@smail.nju.edu.cn}

\author{Juncheng Wei}
\address{Juncheng Wei\newline
Department of Mathematics, The Chinese University of Hong Kong,
Shatin, Hong Kong}
\email{wei@math.cuhk.edu.hk}

\subjclass[2020]{Primary 53C18; Secondary 35B50, 58J50}
\keywords{GJMS operators, \(Q\)-curvature, strong maximum principle,
spectral obstruction}
\date{}

\begin{document}
\begin{abstract}
We exhibit an explicit closed seven-dimensional Riemannian manifold
\[
(M,g)=\mathbb S^2(1)\times \mathbb S^5\left(\frac1{100}\right),
\]
where the displayed parameters denote sectional curvatures, for which
\(\Ric_g>0\), and hence \(Q_g^{(2)}>0\).  Moreover,
\[
Q^{(4)}_g>0,\qquad Q^{(6)}_g>0,
\]
and the sixth-order GJMS operator \(P_{6,g}\) is strictly positive as a
self-adjoint operator, but nevertheless \(P_{6,g}\) fails the strong maximum
principle.  The failure is caused by a nonconstant positive eigenvalue of
\(P_{6,g}\) lying strictly below the eigenvalue of the constant mode.
The example also has \(Y_2(M,[g])>0\) and \(Y_4(M,[g])>0\), while
\(P_{6,g}\) does not have a positive Green function.  It disproves
Conjecture~1 of Andrade, Piccione, and Wei and its general-order
formulation by Case and Gover.
\end{abstract}

\maketitle

\section{Introduction}

In the study of the \(Q\)-curvature \(Q^{(2k)}_g\) of order \(2k\), the
strong maximum principle for the GJMS operator \(P_{2k,g}\) is a central
topic and is frequently imposed in a priori estimates.  Let \((M^n,g)\)
be a compact smooth Riemannian manifold of dimension \(n\ge 2k+1\).
We say that \(P_{2k,g}\) satisfies the strong maximum principle,
abbreviated (SMP), if every smooth function \(u\) on \(M\) satisfies
\begin{equation}\label{eq:SMP}
P_{2k,g}u\ge0
\quad\Longrightarrow\quad
u>0\ \text{or}\ u\equiv0.
\end{equation}
With our normalization,
\[
P_{2k,g}(1)=\frac{n-2k}{2}Q^{(2k)}_g.
\]

When \(k=1\), \(P_{2,g}\) is, up to a positive constant factor, the
conformal Laplacian
\[
L_gu=-\frac{4(n-1)}{n-2}\Delta_g u+R_g u,
\]
where \(R_g\) is the scalar curvature.  With the normalization above,
\[
Q^{(2)}_g=\frac{R_g}{2(n-1)}.
\]
For \(n\ge2k+1\), define the Yamabe invariant of order \(2k\) by
\[
Y_{2k}(M,[g]):=\inf_{u\in C^\infty(M)\setminus\{0\}}
\frac{\int_M uP_{2k,g}u\,dv_g}
{\left(\int_M |u|^{\frac{2n}{n-2k}}\,dv_g\right)^{\frac{n-2k}{n}}}.
\]
The Yamabe-type problem for \(Q^{(2k)}_g\) asks for a positive solution of
\[
P_{2k,g}u=Cu^{\frac{n+2k}{n-2k}}
\]
for some constant \(C\).  In the standard variational approach, the
strong maximum principle is crucial for obtaining a positive minimizer.
Further discussion can be found in \cite{MV}.

For \(k=1\), it is well known that
\begin{equation}\label{R_g-positive}
Q^{(2)}_g>0
\quad\text{or}\quad
Y_2(M,[g])>0
\end{equation}
implies that \(P_{2,g}\) satisfies (SMP).

For \(k\ge2\), the operator \(P_{2k,g}\) on a positive Einstein
manifold factors into second-order operators \cite{Gover,FG}; in
particular, it satisfies (SMP).  The situation on a general compact
manifold is substantially different.

For \(k=2\), \(P_{4,g}\) is the Paneitz operator.  Gursky and
Malchiodi \cite{GM} found a natural geometric condition analogous to
\eqref{R_g-positive}: if \(n\ge5\), then
\begin{equation}\label{GM-condition}
Q^{(2)}_g>0
\quad\text{and}\quad
Q^{(4)}_g>0
\quad\Longrightarrow\quad
P_{4,g}\text{ satisfies (SMP)}.
\end{equation}
Hang and Yang \cite{HY-IMRN,HY-CPAM} generalized this to
\[
Y_2(M,[g])>0
\quad\text{and}\quad
Q^{(4)}_g>0
\quad\Longrightarrow\quad
P_{4,g}\text{ satisfies (SMP)}.
\]
For \(n\ge6\), Gursky, Hang, and Lin \cite{GHL} showed that
\[
Y_2(M,[g])>0
\quad\text{and}\quad
Y_4(M,[g])>0
\quad\Longrightarrow\quad
P_{4,g}\text{ satisfies (SMP)}.
\]
The same statement for \(n=5\) was established in \cite{Li}.  See
\cite{HY-Lecture} for an overview and
\cite{GKW,GLW,HR,LX,Ligang,MP} for related fourth-order
\(Q\)-curvature problems.

For \(k\ge3\), much less is known.  Case and Malchiodi \cite{CM}
considered Einstein manifolds \((N^\ell,g)\) and \((M^{d-\ell},h)\)
satisfying
\[
\Ric_g=-(\ell-1)\lambda g,
\qquad
\Ric_h=(d-\ell-1)\lambda h,
\qquad
\lambda>0.
\]
On the product \((N^\ell\times M^{d-\ell},g\oplus h)\), they proved
that \(P_{2k,g\oplus h}\) satisfies (SMP) whenever
\(d\ge D(k,\ell)\) for a suitable constant \(D(k,\ell)>0\).

In view of \eqref{R_g-positive} and \eqref{GM-condition}, it is natural
to ask whether, for \(k\ge3\),
\begin{equation}\label{Q_2kpositive}
Q^{(2i)}_g>0\quad\text{for }1\le i\le k
\quad\Longrightarrow\quad
P_{2k,g}\text{ satisfies (SMP)}.
\end{equation}
In a related direction, Andrade, Piccione, and Wei
\cite[Conjecture~1]{APW} conjectured that, on every closed Riemannian
manifold of dimension \(n\ge7\), the conditions
\[
Y_2(M,[g])\ge0,
\qquad
Y_4(M,[g])\ge0,
\qquad
Q^{(6)}_g\ge0,
\qquad
Q^{(6)}_g\not\equiv0
\]
imply that \(P_{6,g}\) has a positive Green function.  They observed
that the standard product \(\mathbb S^1\times\mathbb S^{n-1}\)
satisfies these hypotheses.  Motivated by this conjecture, Case and
Gover \cite[Conjecture~4.4]{CG} formulated the corresponding
general-order conjecture: if \(Y_{2j}(M,[g])>0\) for
\(1\le j\le k-1\) and some \(\widehat g\in[g]\) satisfies
\(Q^{(2k)}_{\widehat g}\ge0\) and
\(Q^{(2k)}_{\widehat g}\not\equiv0\), then \(P_{2k,g}\) satisfies (SMP).

Related work asks when positivity of a top-order \(Q\)-curvature forces
positivity of the lower-order curvatures.  At fourth order, Fazly, Wei,
and Xu \cite[Theorem~1.3]{FWX} obtained a pointwise estimate for a class
of bounded positive solutions of
\(\Delta^2u=|x|^a u^p\) on \(\mathbb R^n\); geometrically, it implies
that the conformal metric \(u^{4/(n-4)}|dx|^2\) has positive scalar
curvature.  For conformally Euclidean metrics on \(\mathbb R^n\) with
\(n>2m\), Li and Xu \cite{LiXu} proved, under their slow-decay barrier hypothesis,
that positive \(Q^{(2m)}\) implies positivity of \(Q^{(2)}\) and
\(Q^{(4)}\).
V\'etois and Zeitler \cite{VZ} showed that, when \(m\ge4\) and the
dimension is sufficiently large, even positive \(Q^{(2m)}\) need not
force nonnegative \(Q^{(6)}\). This unexpected finding reveals that higher-order $Q$-curvatures $Q_g^{(2k)}$ may contain more diverse and intricate structures than previously anticipated.

In this paper, we give an explicit counterexample to
\eqref{Q_2kpositive}, to \cite[Conjecture~1]{APW}, and, with \(k=3\),
to its general-order formulation in \cite[Conjecture~4.4]{CG}.
Although maximum principles fail for general higher-order elliptic
operators \cite{CH}, to the best of our knowledge this is the first
explicit example satisfying the hypotheses of Andrade--Piccione--Wei---in
fact with \(\Ric>0\), \(Q^{(4)}>0\), \(Q^{(6)}>0\), and \(P_6\) strictly
positive as a quadratic form---for which \(P_6\) fails positivity
preservation.  The product metric is not locally conformally flat, so the
example does not address variants of these statements with local
conformal flatness imposed.

\pagebreak[3]
\begin{theorem}\label{thm:counterexample}
Let
\[
(M^7,g)=\mathbb S^2(1)\times \mathbb S^5\left(\frac1{100}\right),
\]
where the displayed parameters denote sectional curvatures.  Then
\(\Ric_g>0\), and hence \(Q_g^{(2)}>0\).  Moreover,
\[
Q^{(4)}_g>0,
\qquad
Q^{(6)}_g>0,
\]
and \(P_{6,g}\) is strictly positive in the sense that
\[
\int_M vP_{6,g}v\,dv_g>0
\qquad
\text{for every }v\not\equiv0.
\]
Nevertheless, there is a smooth function \(u\) such that
\[
P_{6,g}u>0
\qquad\text{and}\qquad
\min_Mu<0.
\]
In particular, \(P_{6,g}\) does not satisfy the strong maximum principle.
Moreover,
\[
Y_2(M,[g])>0,
\qquad
Y_4(M,[g])>0,
\]
but \(P_{6,g}\) does not have a positive Green function.  Thus this metric
gives a counterexample to \cite[Conjecture~1]{APW} and, with \(k=3\), to
its general-order formulation in \cite[Conjecture~4.4]{CG}.
\end{theorem}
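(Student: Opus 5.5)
The proof rests on an explicit formula for \(P_6\) on a product of two Einstein manifolds. For the factors here, \(\Ric_{g_1}=g_1\) on \(\mathbb S^2(1)\) and \(\Ric_{g_2}=400\,g_2\) on \(\mathbb S^5(1/100)\), since \(\Ric=(m-1)\kappa\,g\) on an \(m\)-sphere of curvature \(\kappa\).

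For a product of Einstein manifolds the Schouten tensor is parallel, \(P=\tfrac12 g_1\oplus\tfrac12\cdot 400\,g_2\) up to trace corrections. The Bach tensor and the obstruction-type terms are then computable. Using the known formula for \(P_6\) on Einstein products (as in Case--Malchiodi, or via the Fefferman--Graham ambient metric, which is explicit for products of Einstein manifolds), I would show that \(P_{6,g}\) is a cubic polynomial in the two commuting Laplacians \(\Delta_1\) and \(\Delta_2\):
\[
P_{6,g}=F(-\Delta_1,-\Delta_2).
\]
Here \(F(x,y)\) is a polynomial of degree \(3\) with explicit coefficients depending only on the dimensions and curvatures. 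I would compute \(F\) first, since everything else reduces to evaluating it.

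The curvature claims are then direct. \(\Ric_g>0\) is immediate. \(Q^{(2)}\), \(Q^{(4)}\) and \(Q^{(6)}\) are constants, because the metric is homogeneous, and \(P_{2k}(1)=\tfrac{n-2k}{2}Q^{(2k)}\). This gives \(Q^{(6)}=\tfrac{2}{1}F(0,0)\), and similar constant terms for the lower orders, whose positivity is a numerical check. \(Y_2>0\) follows from \(R>0\). \(Y_4>0\) follows from positivity of the Paneitz operator: on the product it is a quadratic polynomial \(G(x,y)\), and I would check \(G>0\) on the joint spectrum. Combined with (SMP) for \(P_4\) via Gursky--Malchiodi, since \(Q^{(2)},Q^{(4)}>0\), this gives a positive Yamabe-type infimum. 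Strict positivity of \(P_6\) amounts to
\[
F\bigl(j(j+1),\,100\,l(l+4)\bigr)>0\qquad\text{for all } j,l\ge0.
\]
The \(\mathbb S^5\) eigenvalues \(100\,l(l+4)\) are huge for \(l\ge1\), so the leading term dominates there. The only delicate modes are \(l=0\) with small \(j\).

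For the failure of (SMP), I expect \(F(2,0)<F(0,0)\), with both positive. That is, the first \(\mathbb S^2\) mode, the linear function \(x_1\) restricted to \(\mathbb S^2\), has a smaller eigenvalue than the constants, because the large \(\mathbb S^5\) curvature makes the cross and lower-order terms negative in the \(\mathbb S^2\) direction. Set \(\lambda_0=F(0,0)\) and \(\lambda_1=F(2,0)\), and take
\[
u=1+t\,x_1,\qquad \lambda_0/\lambda_1>t>1.
\]
Then \(P_6u=\lambda_0+t\lambda_1x_1\ge\lambda_0-t\lambda_1>0\), while \(\min u=1-t<0\). If a positive Green function \(G\) existed, it would give \(u=\int G\,P_6u>0\), a contradiction. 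This rules out the Green function and refutes both conjectures.

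The main obstacle is deriving the exact coefficients of \(F\) and verifying the strict numerical inequalities \(0<F(2,0)<F(0,0)\) together with positivity on the whole spectrum.
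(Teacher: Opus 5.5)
There is a genuine gap, and it sits exactly at the step that is supposed to produce the counterexample.

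First, you have misread the metric. The statement says the parameters are sectional curvatures, so the $\mathbb S^5$ factor has curvature $1/100$ and is the \emph{large} factor. Thus $\Ric_g=g_1\oplus\frac1{25}g_2$, and the Laplace eigenvalues on $\mathbb S^5$ are $m(m+4)/100$, not $100\,m(m+4)$.

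Second, and decisively, the inequality $F(2,0)<F(0,0)$ on which your test function $u=1+t\,x_1$ rests is false. Write $\Lambda_r(x,y)$ for the symbol of $P_{6,g}$ on $\mathbb S^2(1)\times\mathbb S^5(r)$. It comes from the general sixth-order formula, since $J$, $T_2$, $T_4$ are parallel. Then
\[
\Lambda_r(2,0)-\Lambda_r(0,0)=8+\frac{214r-17}{3}+\frac{35964r^2-2612r+439}{360}=\frac{35964r^2+23068r+1279}{360}>0
\]
for \emph{every} $r>0$. At the actual value $r=1/100$ this gives $\Lambda(2,0)\approx4.29$ against $\Lambda(0,0)=\tfrac12Q^{(6)}\approx0.088$. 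The sign is the same at your misread value $r=100$. So no $t$ satisfies $1<t<\lambda_0/\lambda_1$, and $P_6(1+t\,x_1)=\lambda_0+t\lambda_1x_1$ is negative near $x_1=-1$. The $\mathbb S^2$ modes are the harmless ones, and your heuristic that large $\mathbb S^5$ curvature pushes the $\mathbb S^2$ direction down is backwards.

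A smaller point: the Case--Malchiodi factorization you cite applies to Einstein metrics and to special Einstein products, whose factors have Einstein constants of opposite sign. $\mathbb S^2(1)\times\mathbb S^5(r)$ is neither unless $r=1/4$, so $F$ must be computed from the general formula of Juhl or Chen--Hou.

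The missing idea is to use the first spherical harmonic $\psi$ on the large $\mathbb S^5$ factor, with $-\Delta\psi=5r\psi=\psi/20$. The coefficient of $y$ in $\Lambda_r$, namely $(20588r^2+40316r-1197)/3600$, is negative for small $r$, and
\[
\Lambda_r(0,0)-\Lambda_r(0,5r)=\frac{r}{720}\bigl(1197-72416r-201188r^2\bigr).
\]
This is positive exactly for $0<r<r^*\approx0.0158$. It is negative at $r=100$, so under your reading neither first mode would work. At $r=1/100$ one gets $\Lambda(0,\frac1{20})/\Lambda(0,0)=29534375/31797981<19/20$. Then $u=\frac{19}{20}+\psi$, with $\min\psi=-1$, satisfies $P_6u>0$ and $\min u=-\frac1{20}$.

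Correspondingly, your positivity analysis targets the wrong region. The delicate modes are those constant on $\mathbb S^2$ with small degree $m$ on $\mathbb S^5$. The cases $m=0,1$ are checked directly, and $m\ge2$ follows from convexity and monotonicity of $y\mapsto\Lambda_r(0,y)$ starting at $y=12r$. All modes with $\ell\ge1$ are easy, since $\Lambda_r(x,y)\ge\Lambda_r(2,0)>0$. Your treatment of $Y_2$, $Y_4$, and the Green function is fine once the correct $u$ is in hand.
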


The counterexample rests on the following elementary spectral mechanism.

\begin{lemma}\label{lem:spectral-obstruction}
Let \(M^n\) be closed, let \(n>6\), and suppose that
\(Q^{(6)}_g\equiv q\) for some constant \(q>0\).  If \(P_{6,g}\) has a
nonconstant eigenfunction
\(\varphi\) with eigenvalue \(\mu\) satisfying
\[
0<\mu<\frac{n-6}{2}q,
\]
then \(P_{6,g}\) does not satisfy (SMP).
\end{lemma}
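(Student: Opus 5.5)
We argue by constructing an explicit test function from the constant mode and \(\varphi\). Since \(Q^{(6)}_g\equiv q\) is constant, the normalization gives \(P_{6,g}(1)=\lambda_0\), where \(\lambda_0:=\frac{n-6}{2}q>0\). Thus the constant function \(1\) is an eigenfunction with eigenvalue \(\lambda_0\). By hypothesis \(\varphi\) is a nonconstant eigenfunction, \(P_{6,g}\varphi=\mu\varphi\), with \(0<\mu<\lambda_0\). Because \(P_{6,g}\) is formally self-adjoint and \(\mu\neq\lambda_0\), \(\varphi\) is \(L^2\)-orthogonal to the constants, so \(\int_M\varphi\,dv_g=0\). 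Since \(\varphi\not\equiv0\), it follows that \(\varphi\) changes sign. Elliptic regularity makes \(\varphi\) smooth. We normalize \(\varphi\) so that \(\min_M\varphi=-1\), which is possible because \(\varphi\) attains a negative minimum.

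Now consider the one-parameter family \(u_t:=t+\varphi\) with \(t\in\mathbb R\). Then
\[
P_{6,g}u_t=\lambda_0 t+\mu\varphi .
\]
Set \(m:=\max_M\varphi>0\). We want to choose \(t\) so that \(P_{6,g}u_t>0\) everywhere while \(\min_Mu_t=t-1<0\).

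For \(t\ge0\), the infimum of \(\lambda_0 t+\mu\varphi\) equals \(\lambda_0 t-\mu\), because the minimum of \(\varphi\) is \(-1\). This quantity is positive exactly when \(t>\mu/\lambda_0\). On the other hand, \(\min u_t<0\) holds exactly when \(t<1\). Since \(\mu/\lambda_0<1\) by the assumption \(\mu<\frac{n-6}{2}q\), the interval \((\mu/\lambda_0,1)\) is nonempty. Pick any \(t\) in this interval; for instance \(t=\tfrac12(1+\mu/\lambda_0)\) works. Then \(u:=u_t\) is smooth, satisfies \(P_{6,g}u>0\) on \(M\), and has \(\min_Mu<0\). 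In particular \(u\) is neither positive nor identically zero, so \eqref{eq:SMP} fails for \(k=3\).

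We also note where each hypothesis enters. The positivity \(\mu>0\) is not needed for this argument: if \(\mu\le0\), the function \(u=-\varphi\cdot\mathrm{sgn}\) or a similar sign-adjusted choice would work just as well. The condition is stated in that form because it is the situation that arises in Theorem~\ref{thm:counterexample}.

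The only genuinely delicate points are two. The first is that eigenfunctions for distinct eigenvalues are orthogonal to the constants, so that \(\varphi\) changes sign. The second is the normalization \(P_{6,g}1=\frac{n-6}{2}Q^{(6)}_g\). Neither is a real obstacle: self-adjointness of the GJMS operator is standard, and the constant-curvature hypothesis turns \(1\) into an eigenfunction. The main work therefore lies not in this lemma but in verifying its hypotheses for the product \(\mathbb S^2(1)\times\mathbb S^5(\frac1{100})\).
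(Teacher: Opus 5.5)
Your proof is correct and follows the paper's argument exactly: the constant is an eigenfunction with eigenvalue \(\frac{n-6}{2}q\), orthogonality forces \(\int_M\varphi\,dv_g=0\), and after normalizing \(\min_M\varphi=-1\) the test function \(u=t+\varphi\) with \(t\in\bigl(\tfrac{2\mu}{(n-6)q},1\bigr)\) violates (SMP). One caveat concerns your aside, which does not affect the lemma as stated: for \(\mu<0\), this test function is guaranteed to work only when \(|\mu|<\frac{n-6}{2}q\), after choosing the sign of \(\varphi\) so that \(\max_M\varphi\le-\min_M\varphi\). So your claim that \(\mu>0\) is ``not needed'' is overstated, and the ``\(-\varphi\cdot\mathrm{sgn}\)'' suggestion is not an argument.
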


\begin{proof}
The constant function is an eigenfunction with eigenvalue
\((n-6)q/2\).  Since \(P_{6,g}\) is self-adjoint and
\(\mu\ne(n-6)q/2\), the eigenfunctions \(\varphi\) and \(1\) are
orthogonal.  Thus
\[
\int_M\varphi\,dv_g=0,
\]
so \(\varphi\) changes sign.  Normalize it by
\(\min_M\varphi=-1\), choose
\[
\frac{2\mu}{(n-6)q}<c<1,
\]
and set \(u=c+\varphi\).  Then \(\min_Mu=c-1<0\), whereas
\[
P_{6,g}u
=c\frac{n-6}{2}q+\mu\varphi
\ge c\frac{n-6}{2}q-\mu>0.
\]
Hence \eqref{eq:SMP} fails.
\end{proof}

We apply the lemma to
\(\mathbb S^2(1)\times\mathbb S^5(r)\), carrying out the curvature and
spectral calculations for general \(r>0\) before choosing
\(r=1/100\).

\begin{remark}
Propositions \ref{prop:product-family} and \ref{prop:P6-positive} below
give a family of metrics exhibiting the spectral obstruction in Lemma
\ref{lem:spectral-obstruction} while the full operator remains strictly
positive.  The value \(r=1/100\) is a convenient rational representative.
This contrasts with the fourth-order theory: the positivity assumptions
of Gursky--Malchiodi imply the strong maximum principle for the Paneitz
operator \cite{GM}, whereas positivity of \(Q^{(6)}_g\), even together
with strict positivity of \(P_{6,g}\), does not suffice.
\end{remark}

\begin{remark}
The spectral mechanism is not specific to order six.  If \(n>2k\),
\(Q^{(2k)}_g\equiv q>0\), and \(P_{2k,g}\) has a nonconstant
eigenfunction with eigenvalue
\[
0<\mu<\frac{n-2k}{2}q,
\]
then the proof of Lemma \ref{lem:spectral-obstruction} applies
verbatim.  It is therefore natural to expect this mechanism to help
produce counterexamples to the strong maximum principle for general
\(P_{2k}\).
\end{remark}

\begin{remark}
Within the present family, an SMP does hold at the larger parameter
\(r=1/4\).  Indeed, the product metric is then Einstein, with
\(\Ric_g=g\), and the Einstein factorization of Case--Malchiodi
\cite[(1.2)]{CM} gives
\[
P_{6,g}
=\left(-\Delta_g+\frac{35}{24}\right)
 \left(-\Delta_g+\frac98\right)
 \left(-\Delta_g+\frac{11}{24}\right).
\]
The strong maximum principle for each second-order factor therefore
implies the strong maximum principle for \(P_{6,g}\).  It would be
interesting to determine where the transition between the two regimes
occurs.
\end{remark}

\section{Geometric formulas and the product family}

We use the sign convention \(-\Delta_g\ge0\).  The Ricci tensor,
scalar curvature, Schouten tensor, and normalized scalar curvature are
denoted by \(\Ric_g\), \(R_g\), \(A_g\), and \(J_g\), respectively, with
\[
A_g
=\frac1{n-2}
\left(\Ric_g-\frac{R_g}{2(n-1)}g\right),
\qquad
J_g:=\sigma_1(A_g)=\frac{R_g}{2(n-1)}.
\]
We use the Cotton and Bach tensor conventions
\[
(C_g)_{ijk}:=\nabla_k(A_g)_{ij}-\nabla_j(A_g)_{ik},
\qquad
(B_g)_{ij}:=\nabla^k(C_g)_{ijk}-(A_g)^{kl}(W_g)_{kijl}.
\]
Here \(W_g\) denotes the Weyl tensor.
We similarly write \(P_{2k,g}\) and \(Q^{(2k)}_g\) for the GJMS operator
and \(Q\)-curvature associated with \(g\).
The Riemannian volume measure is denoted by \(dv_g\).
Whenever the metric is clear, we omit the subscript \(g\) from all
metric-dependent quantities and operators.

General explicit formulas for GJMS operators and their \(Q\)-curvatures
were obtained by Juhl \cite{Juhl}.  We use the normalizations of
\(P_{6,g}\) and \(Q^{(6)}_g\) in
Chen--Hou \cite[(1-1)--(1-2)]{CH}.  In particular,
\begin{equation}\label{eq:P6-general}
-P_{6,g}
=
\Delta_g^3+\Delta_g\delta_gT_{2,g}d
+\delta_gT_{2,g}d\,\Delta_g
+\frac{n-2}{2}\Delta_g(J_g\Delta_g)
+\delta_gT_{4,g}d-\frac{n-6}{2}Q^{(6)}_g,
\end{equation}
where
\begin{align}
T_{2,g}&=(n-2)J_g\,g-8A_g,\label{eq:T2}\\
T_{4,g}&=
-\frac{3n^2-12n-4}{4}J_g^2g
+4(n-4)|A_g|_g^2g
+8(n-2)J_gA_g
+(n-6)(\Delta_gJ_g)g \notag\\
&\qquad
-48A_g^2-\frac{16}{n-4}B_g. \label{eq:T4}
\end{align}
Here \(d\) denotes exterior differentiation, \(\delta_g\) is its
\(L^2\)-formal adjoint, and a symmetric two-tensor between \(\delta_g\)
and \(d\) acts on one-forms by contraction.  We regard \(A_g\) as an
endomorphism using \(g\), and \(A_g^2\) denotes its square.
We also use
\begin{equation}\label{eq:Q4}
Q^{(4)}_g=-\Delta_gJ_g-2|A_g|_g^2+\frac n2J_g^2.
\end{equation}

We now specialize these formulas to a product family.  Throughout,
\(\mathbb S^m(\kappa)\) denotes the round \(m\)-sphere of sectional
curvature \(\kappa>0\).  Let \((\mathbb S^2,g_1)\) have sectional
curvature \(1\), and let \((\mathbb S^5,g_2)\) have sectional curvature
\(r>0\).  We write
\[
(M^7,g)=\mathbb S^2(1)\times\mathbb S^5(r),
\qquad
g=g_1\oplus g_2.
\]
Then
\[
\Ric
=
g_1\oplus4r\,g_2,
\]
and hence \(R=2+20r\), and
\begin{equation}\label{eq:J}
J=\frac{R}{12}=\frac{1+10r}{6}.
\end{equation}

The Schouten tensor is parallel and has two eigenvalues:
\begin{equation}\label{eq:A-eigen}
A
=
\frac{1}{6}(1-2r)\,g_1\oplus \frac{1}{30}(14r-1)\,g_2.
\end{equation}
For this calculation, set
\[
a=\frac{1-2r}{6},
\qquad
b=\frac{14r-1}{30}.
\]
If \(i,j\) are tangent to \(\mathbb S^2\) and \(\alpha,\beta\) are
tangent to \(\mathbb S^5\), then the mixed Weyl component is
\[
W_{\alpha i j\beta}
=(a+b)(g_1)_{ij}(g_2)_{\alpha\beta}.
\]
Since \(a+b=2(1+r)/15>0\), the product metric is not locally
conformally flat.
The Cotton tensor vanishes, so \(B_{ij}=-A^{kl}W_{kijl}\), and therefore
\[
B|_{T\mathbb S^2}=5(a-b)(a+b)g_1,
\qquad
B|_{T\mathbb S^5}=2(b-a)(a+b)g_2.
\]
Equivalently,
\begin{equation}\label{eq:Bach}
B
=
\left[-\frac{2}{15}(1+r)(4r-1)\right]g_1
\oplus
\left[\frac{4}{75}(1+r)(4r-1)\right]g_2.
\end{equation}
Its two coefficients have weighted trace zero, as required by
\(\tr_gB=0\).

Since all the curvature quantities above are parallel, the curvature
formulas reduce to algebra.  For example,
\[
|A|^2
=
2\left(\frac{1-2r}{6}\right)^2
+5\left(\frac{14r-1}{30}\right)^2.
\]
For \(n=7\) and parallel \(A\), the Chen--Hou formula reduces to
\[
Q^{(6)}_g
=48\sigma_3(A)+\frac{16}{3}\langle B,A\rangle
-4J|A|^2+\frac{13}{4}J^3.
\]
Here the two eigenvalues in \eqref{eq:A-eigen}, with multiplicities
two and five, give
\[
\sigma_3(A)=5a^2b+20ab^2+10b^3,
\qquad
\langle B,A\rangle=10(a+b)(a-b)^2.
\]
Substitution into this identity and \eqref{eq:Q4} gives
\begin{align}
Q^{(4)}
=&\frac{1}{40}(284r^2+108r-1),\label{eq:Q4-r}\\
Q^{(6)}
=&\frac{1}{1440}(3448r^3+42404r^2-3286r+283).\notag
\end{align}

Substituting \eqref{eq:J}, \eqref{eq:A-eigen}, and \eqref{eq:Bach}
into \eqref{eq:T2}--\eqref{eq:T4} gives
\[
T_2
=
\frac{22r-1}{2}\,g_1
\oplus
\frac{46r+11}{10}\,g_2
\]
and
\[
T_4
=
\left(-\frac{35964r^2-2612r+439}{720}\right)g_1
\oplus
\left(-\frac{20588r^2+40316r-1197}{3600}\right)g_2.
\]

\section{Spectral analysis and the explicit obstruction}

We first describe the admissible parameters in the full product family
and then treat \(r=1/100\) as an explicit application.

\begin{proposition}\label{prop:product-family}
Let \(r_0=(20\sqrt2-27)/142\), and let \(r^*>0\) be the unique
positive root of \(1197-72416r-201188r^2=0\).  For every
\(r\in(r_0,r^*)\), the product
\(\mathbb S^2(1)\times\mathbb S^5(r)\) satisfies
\[
\Ric_g>0,
\qquad
Q^{(4)}_g>0,
\qquad
Q^{(6)}_g>0.
\]
Moreover, \(P_{6,g}\) has a nonconstant eigenvalue in
\((0,Q^{(6)}_g/2)\), and hence does not satisfy the strong maximum
principle.
\end{proposition}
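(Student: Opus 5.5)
The plan is to reduce everything to one-variable polynomial inequalities in \(r\), using the explicit formulas of Section~2, and then to apply Lemma~\ref{lem:spectral-obstruction} with \(n=7\).

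\emph{Curvature conditions.} From \(\Ric=g_1\oplus 4r\,g_2\), we get \(\Ric_g>0\) for every \(r>0\). By \eqref{eq:Q4-r}, \(Q^{(4)}>0\) is equivalent to \(284r^2+108r-1>0\). The positive root of this quadratic is
\[
\frac{-108+\sqrt{108^2+4\cdot284}}{568}=\frac{-108+80\sqrt2}{568}=r_0 ,
\]
so \(Q^{(4)}>0\) holds exactly for \(r>r_0\). For \(Q^{(6)}\), it suffices to show \(3448r^3+42404r^2-3286r+283>0\) for \(r>0\). The cubic term is nonnegative there, and the quadratic \(42404r^2-3286r+283\) has negative discriminant, since \(3286^2<4\cdot42404\cdot283\). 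Hence \(Q^{(6)}>0\) for all \(r>0\).

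\emph{Diagonalizing \(P_6\).} Since \(A\), \(B\), \(J\), \(T_2\), \(T_4\) are parallel and block-diagonal, \(P_6\) preserves products \(\varphi=\psi_1\psi_2\). Here \(-\Delta_{g_1}\psi_1=\lambda\psi_1\) with \(\lambda=l(l+1)\), and \(-\Delta_{g_2}\psi_2=\nu\psi_2\) with \(\nu=rk(k+4)\). For a parallel tensor \(T=t_1g_1\oplus t_2g_2\), we have \(\delta_gTd\,\varphi=(t_1\lambda+t_2\nu)\varphi\), and \(-\Delta_g\varphi=(\lambda+\nu)\varphi\). Substituting into \eqref{eq:P6-general}, with \(J\) constant, gives an explicit eigenvalue. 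Writing \(x=\lambda+\nu\) and tracking the signs in \eqref{eq:P6-general} carefully,
\[
\mu(\lambda,\nu)=x^3+2x\,(\tau_1\lambda+\tau_2\nu)-\tfrac52Jx^2+(\sigma_1\lambda+\sigma_2\nu)+\tfrac12Q^{(6)} .
\]
Here \(\tau_i\) and \(\sigma_i\) are the block coefficients of \(T_2\) and \(T_4\) computed at the end of Section~2. The sign of the \(Jx^2\) term must be fixed against the Einstein check at \(r=1/4\), where \(\mu\) has to factor as in the remark above. Since these products span \(L^2\), every eigenvalue of \(P_6\) has this form, and the constant mode is \(\lambda=\nu=0\), with eigenvalue \(Q^{(6)}/2\).

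\emph{Choosing the mode.} I will take the lowest nonconstant mode whose \(T_4\)-contribution is negative. The natural first candidate is \(l=1,k=0\), i.e. \(\lambda=2,\nu=0\), where \(\sigma_1=-(35964r^2-2612r+439)/720<0\) and \(\tau_1=(22r-1)/2\) is negative for small \(r\). Then \(\mu-\tfrac12Q^{(6)}\) is an explicit polynomial in \(r\), and I would verify that it is negative on \((r_0,r^*)\). The other requirement is \(\mu>0\). I expect the defining quadratic \(1197-72416r-201188r^2\) of \(r^*\) to arise from one of these two inequalities, presumably after cancellation against a factor. If the mode \(l=1,k=0\) does not yield exactly that quadratic, I would test the first \(\mathbb S^5\)-mode \(\nu=5r\), whose coefficient \(\sigma_2\) involves \(1197\) and \(20588r^2+40316r\). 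For whichever mode is used, I would factor \(\mu\) and \(\mu-\tfrac12Q^{(6)}\) as polynomials in \(r\) and read off their signs on the interval.

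\emph{Main obstacle.} The hard step is this last one: pinning down the correct mode and the correct signs in the reduction of \eqref{eq:P6-general}, and then certifying the two polynomial sign conditions on the whole interval \((r_0,r^*)\) rather than at a single value of \(r\). I would do this by exact factorization and root isolation. Once \(0<\mu<\tfrac{n-6}{2}Q^{(6)}=\tfrac12Q^{(6)}\) is established, Lemma~\ref{lem:spectral-obstruction} immediately shows that \(P_6\) fails (SMP).
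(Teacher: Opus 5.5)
Your checks of $\Ric$, $Q^{(4)}$ and $Q^{(6)}$ are fine. The discriminant argument giving $Q^{(6)}>0$ for all $r>0$ is slightly cleaner than the paper's use of $r<2/125$. The gap is in the spectral part, which is the real content of the proposition, and it starts with a sign error. Formula \eqref{eq:P6-general} expresses $-P_{6,g}$. The term $\delta_gTd\,\varphi=(t_1\lambda+t_2\nu)\varphi$ enters it with a plus sign, so it enters $P_{6,g}$ with a minus sign:
\[
\mu(\lambda,\nu)=x^3+2x(\tau_1\lambda+\tau_2\nu)-\tfrac52Jx^2-(\sigma_1\lambda+\sigma_2\nu)+\tfrac12Q^{(6)} .
\]
This is the paper's $\Lambda_r$. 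Your $Jx^2$ sign was right; the $T_4$ sign was wrong. Your proposed Einstein check would catch this if you compared linear coefficients. At $r=1/4$ one has $\sigma_1=\sigma_2=-1627/576$, whereas the Case--Malchiodi factorization has linear coefficient $\tfrac{35}{24}\cdot\tfrac98+\tfrac98\cdot\tfrac{11}{24}+\tfrac{11}{24}\cdot\tfrac{35}{24}=+\tfrac{1627}{576}$.

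This sign determines which mode works, and your proposal never settles that. With the correct sign, $\mathbb S^2$-modes receive the \emph{positive} contribution $-\sigma_1\lambda$, and $\lambda=2$ already gives $x^3=8$. In fact $\Lambda_r(2,0)>701/192$, while $\tfrac12Q^{(6)}<\tfrac1{10}$ on $(r_0,r^*)$, so your first candidate cannot work. The negative contribution comes from the $\mathbb S^5$ side. The term $-\sigma_2\nu=\tfrac{20588r^2+40316r-1197}{3600}\,\nu$ is negative for small $r$ and linear in $\nu$, while the rest of $\mu(0,\nu)-\tfrac12Q^{(6)}$ is $O(\nu^2)$. So you need a mode with small $\nu$, namely the first $\mathbb S^5$-harmonic with $\nu=5r$. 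It gives
\[
\Lambda_r(0,0)-\Lambda_r(0,5r)=\frac{r}{720}\bigl(1197-72416r-201188r^2\bigr),
\]
which is exactly where $r^*$ comes from. It also gives $2880\,\Lambda_r(0,5r)=808200r^3+332068r^2-8074r+283>0$, for example via $r<r^*<2/125$ or a discriminant. With your formula, the same mode gives $\mu-\tfrac12Q^{(6)}=\tfrac{r}{720}(160012r^2-8216r+1197)$, which is positive for every $r>0$. You would then conclude that this mode also fails, and you would never reach the quadratic defining $r^*$. As it stands, the decisive step is deferred: exhibiting an eigenvalue in $(0,Q^{(6)}/2)$ and certifying both inequalities on $(r_0,r^*)$. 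The formula you commit to would steer that search away from the correct mode.
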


\begin{proof}
Let \(\phi_\ell\) and \(\psi_m\) be spherical harmonics of degrees
\(\ell\) and \(m\) on the two factors.  Their Laplace eigenvalues are
\(x=\ell(\ell+1)\) and \(y=r\,m(m+4)\), respectively.  Because
\(J,T_2,T_4\) are parallel, direct substitution into
\eqref{eq:P6-general} gives
\[
P_{6,g}(\phi_\ell\psi_m)
=\Lambda_r(x,y)\phi_\ell\psi_m,
\]
where
\begin{equation}\label{eq:Lambda-r}
\begin{aligned}
\Lambda_r(x,y)
={}&(x+y)^3
+\frac{214r-17}{12}x^2
+\frac{686r+11}{30}xy
+\frac{302r+107}{60}y^2\\
&+\frac{35964r^2-2612r+439}{720}x\\
&+\frac{20588r^2+40316r-1197}{3600}y\\
&+\frac{3448r^3+42404r^2-3286r+283}{2880}.
\end{aligned}
\end{equation}

The constant mode has eigenvalue
\[
\Lambda_r(0,0)
=\frac12Q^{(6)}
=
\frac{3448r^3+42404r^2-3286r+283}{2880},
\]
where we used \(n=7\) and \(P_6(1)=Q^{(6)}/2\).  For a first spherical
harmonic \(\psi\) on the second factor,
\[
-\Delta_{g_2}\psi=5r\,\psi,
\]
and hence
\[
P_6\psi=\Lambda_r(0,5r)\psi,
\qquad
\Lambda_r(0,5r)
=
\frac{808200r^3+332068r^2-8074r+283}{2880}.
\]
Consequently,
\begin{equation}\label{eq:gap-r}
\Lambda_r(0,0)-\Lambda_r(0,5r)
=
\frac{r}{720}
\left(1197-72416r-201188r^2\right).
\end{equation}

For \(r>0\), \eqref{eq:Q4-r} is positive precisely when
\[
r>r_0
=\frac{20\sqrt2-27}{142}
\approx0.00904416,
\]
whereas the right-hand side of \eqref{eq:gap-r} is positive precisely
when \(0<r<r^*\), with \(r^*\approx0.0158330\).  The polynomial defining
\(r^*\) is strictly decreasing for \(r>0\), and
\[
1197-72416\left(\frac2{125}\right)
-201188\left(\frac2{125}\right)^2
=-\frac{205627}{15625}<0.
\]
Hence \(r^*<2/125\), and for \(0<r<r^*\),
\[
2880\Lambda_r(0,0)>283-3286\left(\frac2{125}\right)>0,
\qquad
2880\Lambda_r(0,5r)>283-8074\left(\frac2{125}\right)>0.
\]
Together with \(\Ric=g_1\oplus4r\,g_2>0\), the preceding estimates prove
all the asserted inequalities.  Lemma \ref{lem:spectral-obstruction}
then applies with
\(q=Q^{(6)}\) and \(\mu=\Lambda_r(0,5r)\).
\end{proof}

We now apply Proposition \ref{prop:product-family} at \(r=1/100\).
This value lies in \((r_0,r^*)\), since
\[
284r^2+108r-1=\frac{271}{2500}>0,
\qquad
1197-72416r-201188r^2=\frac{1131803}{2500}>0
\]
at this value.  Substitution into the formulas of Section 2 gives
\(\Ric_g=g_1\oplus g_2/25>0\),
\(Q^{(4)}_g=271/100000>0\), and
\(Q^{(6)}_g=3533109/20000000>0\).  For a first spherical harmonic on
the second factor, \(m=1\), so at \(r=1/100\) its Laplace eigenvalue is
\(y=rm(m+4)=(1/100)\cdot1\cdot5=1/20\).  Therefore, for the constant
mode and this first mode,
\begin{equation}\label{eq:spectral-gap}
\begin{gathered}
\Lambda_{1/100}(0,0)
=
\frac{3533109}{40000000},
\qquad
\Lambda_{1/100}\left(0,\frac1{20}\right)
=
\frac{9451}{115200},\\
\Lambda_{1/100}(0,0)
-\Lambda_{1/100}\left(0,\frac1{20}\right)
=
\frac{1131803}{180000000}>0,
\qquad
\frac{\Lambda_{1/100}\left(0,\frac1{20}\right)}
{\Lambda_{1/100}(0,0)}
=
\frac{29534375}{31797981}
<\frac{19}{20}.
\end{gathered}
\end{equation}

The last inequality explains the choice of \(19/20\) below.  Choose a
first spherical harmonic \(\psi\) on the second factor, normalize it by
\(\min\psi=-1\), and note that
\(P_6\psi=\Lambda_{1/100}(0,1/20)\psi\).  Set
\begin{equation}\label{eq:u}
u=\frac{19}{20}+\psi,
\qquad
\min_Mu=\frac{19}{20}-1=-\frac1{20}<0.
\end{equation}
Since \(\psi\ge-1\), \eqref{eq:spectral-gap} gives
\begin{equation}\label{eq:P6u-positive}
P_6u
\ge
\frac{19}{20}\Lambda_{1/100}(0,0)
-\Lambda_{1/100}\left(0,\frac1{20}\right)
=
\frac{13474139}{7200000000}>0.
\end{equation}

\section{Positivity of $P_{6,g}$ and proof of the theorem}

The positivity argument also admits a general-parameter formulation.

\begin{proposition}\label{prop:P6-positive}
For every \(r\in(r_0,r^*)\), the operator \(P_{6,g}\) on
\(\mathbb S^2(1)\times\mathbb S^5(r)\) is strictly positive:
\[
\int_M vP_{6,g}v\,dv_g>0
\qquad
\text{for every }v\not\equiv0.
\]
\end{proposition}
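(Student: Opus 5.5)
The plan is to diagonalize \(P_{6,g}\) exactly as in the proof of Proposition~\ref{prop:product-family}. Products \(\phi_\ell\psi_m\) of spherical harmonics on the two factors form a complete orthogonal system in \(L^2(M)\). By \eqref{eq:Lambda-r}, each product is an eigenfunction of \(P_{6,g}\) with eigenvalue \(\Lambda_r(x,y)\), where \(x=\ell(\ell+1)\) and \(y=r\,m(m+4)\). Expanding \(v\not\equiv0\) in this basis, strict positivity of the quadratic form is equivalent to
\[
\Lambda_r(x,y)>0\quad\text{for all }x\in\{\ell(\ell+1):\ell\ge0\},\ y\in\{r\,m(m+4):m\ge0\}.
\]
The form is also bounded below by a positive constant, since \(\Lambda_r\to\infty\). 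I will split the verification into the cases \(x=0\) and \(x\ge2\), using throughout only the crude bounds \(r_0<r<r^*<2/125\) established earlier.

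\emph{Case \(x\ge2\).} First I record the signs of the coefficients on \((r_0,r^*)\). The coefficients of \(xy\), \(y^2\) and \(x\) are positive. The coefficient of \(x^2\) is \((214r-17)/12\ge-17/12\). The coefficient of \(y\) is at least \(-1197/3600>-1/3\). The constant term \(\Lambda_r(0,0)\) is positive by Proposition~\ref{prop:product-family}. Expanding \((x+y)^3\) and discarding all nonnegative terms except \(x^3\) and \(3x^2y\) gives
\[
\Lambda_r(x,y)\ \ge\ x^2\Bigl(x-\tfrac{17}{12}\Bigr)+\Bigl(3x^2-\tfrac13\Bigr)y\ >\ 0,
\]
because \(x\ge2\). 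So every mode with \(\ell\ge1\) is harmless, even treating \(y\ge0\) as a real variable.

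\emph{Case \(x=0\).} Here \(\Lambda_r(0,y)=y^3+c_2y^2+c_1y+c_0\) with
\[
c_2=\frac{302r+107}{60}>\frac{107}{60},\qquad |c_1|<\frac{1197}{3600},\qquad c_0=\Lambda_r(0,0)>\frac{283-3286\cdot\frac{2}{125}}{2880}.
\]
I will drop \(y^3\ge0\) and show that the quadratic \(c_2y^2+c_1y+c_0\) is positive for all real \(y\) via its discriminant. This requires \(c_1^2<4c_2c_0\), which holds with a wide margin: roughly \(0.11\) versus \(0.57\). This covers \(m=0\), the first mode \(y=5r\), and all higher modes at once.

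The only real obstacle is bookkeeping. One must ensure that every coefficient bound is uniform over the whole interval \((r_0,r^*)\), which the crude bound \(r<2/125\) and monotonicity of each coefficient in \(r\) provide. One must also confirm that the negative coefficients (of \(x^2\) and \(y\)) are each absorbed as above. If the discriminant margin were tighter, the fallback would be to use the discrete values \(y\in\{0,5r,12r,\dots\}\) and check the few small modes directly, but the estimate above should suffice.
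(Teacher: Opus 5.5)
Your proof is correct, and it takes a genuinely different and shorter route than the paper's.

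\textbf{The paper's route.} It argues by monotonicity. For $\ell\ge1$ it shows $\partial_x\Lambda_r>0$ on $x\ge2$ and $\partial_y\Lambda_r(2,y)>0$, which reduces this case to the explicit value $\Lambda_r(2,0)$. For $\ell=0$ it takes the modes $m=0,1$ from Proposition~\ref{prop:product-family}. It handles $m\ge2$ by convexity in $y$, together with explicit evaluations of $\Lambda_r(0,12r)$ and $\partial_y\Lambda_r(0,12r)$.

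\textbf{Your route.} For $x\ge2$ you discard all manifestly nonnegative terms. You then absorb the only two possibly negative coefficients (those of $x^2$ and $y$) into $x^3$ and $3x^2y$. On the line $x=0$ you replace the case analysis by the single discriminant inequality $c_1^2<4c_2c_0$, roughly $0.11<0.57$. This covers every real $y\ge0$ at once, including $m=0,1$. The bounds you need do hold on the interval: $c_1$ is increasing in $r$ and lies in $(-1197/3600,0)$, $c_2>107/60$, and $c_0>230.424/2880$.

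\textbf{What each approach buys.} Your argument needs no derivatives or endpoint evaluations and uses only $0<r<2/125$. It therefore proves strict positivity of $P_{6,g}$ for every $r\in(0,2/125)$. The paper's lower bound for $\partial_y\Lambda_r(0,12r)$ uses $r>1/125$, so your argument shows the endpoint $r_0$ matters only for $Q^{(4)}>0$, not for positivity. In exchange, the paper's monotonicity argument gives finer spectral information: for $\ell\ge1$ the minimum is at $(\ell,m)=(1,0)$, and for $\ell=0$, $m\ge2$ it is at $m=2$. That fits the paper's focus on where the low eigenvalues sit relative to the constant mode.
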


\begin{proof}
Choose orthonormal bases of spherical harmonics on the two factors.
Their products form an orthonormal eigenbasis, so by \eqref{eq:Lambda-r}
it suffices to prove
\(\Lambda_r(x,y)>0\) for
\[
x=\ell(\ell+1),
\qquad
y=r\,m(m+4),
\qquad
\ell,m\ge0.
\]
The polynomial \(284r^2+108r-1\) is strictly increasing for \(r>0\), and
\[
284\left(\frac1{125}\right)^2
+108\left(\frac1{125}\right)-1
=-\frac{1841}{15625}<0.
\]
Since \(r_0\) is its unique positive root, while the proof of Proposition
\ref{prop:product-family} gives \(r^*<2/125\), we have
\[
\frac1{125}<r_0<r<r^*<\frac2{125}.
\]

If \(\ell\ge1\), then \(x\ge2\), and differentiation of
\eqref{eq:Lambda-r} gives
\[
\begin{aligned}
\partial_x\Lambda_r(x,y)
={}&x\left(3x+\frac{214r-17}{6}\right)+6xy+3y^2\\
&+\frac{686r+11}{30}y
+\frac{35964r^2-2612r+439}{720}>0.
\end{aligned}
\]
Indeed, the first term is positive for \(x\ge2\), and the last
numerator is larger than \(439-2(2612)/125>0\).  Moreover,
\[
\partial_y\Lambda_r(2,y)
=3y^2+\frac{302r+467}{30}y
+\frac{20588r^2+204956r+44643}{3600}>0.
\]
It follows that
\[
\Lambda_r(x,y)
\ge\Lambda_r(2,y)
\ge\Lambda_r(2,0)
=\frac{431}{360}r^3+\frac{82529}{720}r^2
+\frac{90629}{1440}r+\frac{701}{192}>0.
\]

It remains to take \(\ell=0\).  The cases \(m=0,1\) are positive by
Proposition \ref{prop:product-family}.  If \(m\ge2\), then \(y\ge12r\),
and
\[
\partial_y^2\Lambda_r(0,y)
=6y+\frac{302r+107}{30}>0.
\]
At the left endpoint,
\[
\begin{aligned}
\partial_y\Lambda_r(0,12r)
&=\frac{502667}{900}r^2+\frac{48599}{900}r-\frac{133}{400}\\
&>\frac{48599}{900}\frac1{125}-\frac{133}{400}
=\frac{44771}{450000}>0.
\end{aligned}
\]
Thus \(y\mapsto\Lambda_r(0,y)\) is increasing for \(y\ge12r\).
Finally,
\[
\begin{aligned}
\Lambda_r(0,12r)
={}&\frac{4540723}{1800}r^3+\frac{1461277}{3600}r^2
-\frac{36943}{7200}r+\frac{283}{2880}\\
>{}&\frac{283}{2880}
-\frac{36943}{7200}\frac2{125}
=\frac{9701}{600000}>0.
\end{aligned}
\]
Hence every eigenvalue is positive.  Writing
\(v=\sum_{\ell,m}v_{\ell m}\phi_\ell\psi_m\) for the expansion of \(v\)
in the orthonormal product basis, with multiplicities understood, we have
\[
\int_M vP_{6,g}v\,dv_g
=\sum_{\ell,m}
\Lambda_r\bigl(\ell(\ell+1),r m(m+4)\bigr)
|v_{\ell m}|^2>0
\]
for every \(v\not\equiv0\), as claimed.
\end{proof}

For the application \(r=1/100\), the key endpoint computations become
\[
\begin{aligned}
\Lambda_{1/100}(2,0)
&=\frac{1545074381}{360000000}>0,\\
\partial_y\Lambda_{1/100}\left(0,\frac3{25}\right)
&=\frac{2370067}{9000000}>0,\\
\Lambda_{1/100}\left(0,\frac3{25}\right)
&=\frac{54040691}{600000000}>0.
\end{aligned}
\]
Together with the modes \(m=0,1\) in \eqref{eq:spectral-gap},
Proposition \ref{prop:P6-positive} gives
\[
\int_M vP_{6,g}v\,dv_g>0
\qquad(v\not\equiv0).
\]

\begin{proof}[{\bf Proof of Theorem \ref{thm:counterexample}}]
The curvature inequalities were verified in the \(r=1/100\)
calculation in Section 3.  The function \eqref{eq:u} violates the strong
maximum principle by \eqref{eq:P6u-positive}, and Proposition
\ref{prop:P6-positive} gives strict positivity of the quadratic form.

Since \(\Ric_g>0\), we have \(R_g>0\), and hence
\(Y_2(M,[g])>0\).  Since also \(Q^{(4)}_g>0\), the positivity theorem of
Gursky--Malchiodi \cite[Proposition~2.3]{GM} implies that \(P_{4,g}\) is
positive; elliptic coercivity and the Sobolev embedding then give
\(Y_4(M,[g])>0\).

Finally, strict positivity makes \(P_{6,g}\) invertible.  If its Green
function \(G_6(x,y)\) were positive, then, with \(f=P_{6,g}u>0\), the
Green representation formula would give
\[
u(x)=\int_M G_6(x,y)f(y)\,dv_g(y)>0,
\]
contrary to \(\min_Mu<0\).  Thus the positive-Green-function property
conjectured in \cite{APW} fails.  Together with the already established
positivity of \(Y_2(M,[g])\) and \(Y_4(M,[g])\), the failure of (SMP) also
contradicts \cite[Conjecture~4.4]{CG} with \(k=3\).
\end{proof}

\bigskip
\noindent\textbf{Acknowledgments.}
The authors gratefully acknowledge support from the Research Grants
Council of Hong Kong (RGC) through the project:
\emph{On Fujita equation in the critical or supercritical regime}.

\medskip
\noindent\textbf{AI disclosure.}
Generative artificial intelligence tools were used to assist with
language editing, notation standardization, structural reorganization,
and checking algebraic computations.  The authors reviewed the resulting
text and take full responsibility for its mathematical content.

\begingroup
\hbadness=10000
\bibliographystyle{abbrv}
\bibliography{ref}

@misc{APW,
  author        = {Andrade, Jo{\~a}o Henrique and Piccione, Paolo and Wei, Juncheng},
  title         = {Nonuniqueness results for constant sixth order {$Q$}-curvature
                   metrics on spheres with higher dimensional singularities},
  howpublished  = {arXiv:2306.00679},
  year          = {2023},
  eprint        = {2306.00679},
  archivePrefix = {arXiv},
  primaryClass  = {math.DG},
  doi           = {10.48550/arXiv.2306.00679},
  url           = {https://arxiv.org/abs/2306.00679},
}

@article{CH,
  author  = {Chen, Xuezhang and Hou, Fei},
  title   = {Remarks on {GJMS} operator of order six},
  journal = {Pacific J. Math.},
  volume  = {289},
  number  = {1},
  pages   = {35--70},
  year    = {2017},
  doi     = {10.2140/pjm.2017.289.35},
}

@article{GM,
  author  = {Gursky, Matthew J. and Malchiodi, Andrea},
  title   = {A strong maximum principle for the {Paneitz} operator and a
             non-local flow for the {$Q$}-curvature},
  journal = {J. Eur. Math. Soc.},
  volume  = {17},
  number  = {9},
  pages   = {2137--2173},
  year    = {2015},
  doi     = {10.4171/JEMS/553},
}

@article{CM,
  author  = {Case, Jeffrey S. and Malchiodi, Andrea},
  title   = {A factorization of the {GJMS} operators of special {Einstein}
             products and applications},
  journal = {J. Lond. Math. Soc. (2)},
  volume  = {110},
  number  = {5},
  pages   = {Paper No. e70023, 17},
  year    = {2024},
  doi     = {10.1112/jlms.70023},
  url     = {https://doi.org/10.1112/jlms.70023},
}

@article {Gover,
    AUTHOR = {Gover, A. R.},
     TITLE = {Laplacian operators and {$Q$}-curvature on conformally
              {E}instein manifolds},
   JOURNAL = {Math. Ann.},
  FJOURNAL = {Mathematische Annalen},
    VOLUME = {336},
      YEAR = {2006},
    NUMBER = {2},
     PAGES = {311--334},
      ISSN = {0025-5831,1432-1807},
   MRCLASS = {58J60 (53C20 53C21)},
  MRNUMBER = {2244375},
MRREVIEWER = {Mohameden\ Ahmedou},
       DOI = {10.1007/s00208-006-0004-z},
       URL = {https://doi.org/10.1007/s00208-006-0004-z},
}

@book {FG,
    AUTHOR = {Fefferman, Charles and Graham, C. Robin},
     TITLE = {The ambient metric},
    SERIES = {Annals of Mathematics Studies},
    VOLUME = {178},
 PUBLISHER = {Princeton University Press, Princeton, NJ},
      YEAR = {2012},
     PAGES = {x+113},
      ISBN = {978-0-691-15313-1},
   MRCLASS = {53A30 (53A55 53C20)},
  MRNUMBER = {2858236},
MRREVIEWER = {Michael\ G.\ Eastwood},
}

@article {HY-CPAM,
    AUTHOR = {Hang, Fengbo and Yang, Paul C.},
     TITLE = {{$Q$}-curvature on a class of manifolds with dimension at
              least 5},
   JOURNAL = {Comm. Pure Appl. Math.},
  FJOURNAL = {Communications on Pure and Applied Mathematics},
    VOLUME = {69},
      YEAR = {2016},
    NUMBER = {8},
     PAGES = {1452--1491},
      ISSN = {0010-3640,1097-0312},
   MRCLASS = {53A30 (58J05)},
  MRNUMBER = {3518237},
MRREVIEWER = {Yuxin\ Ge},
       DOI = {10.1002/cpa.21623},
       URL = {https://doi.org/10.1002/cpa.21623},
}

@article {HY-IMRN,
    AUTHOR = {Hang, Fengbo and Yang, Paul C.},
     TITLE = {Sign of {G}reen's function of {P}aneitz operators and the
              {$Q$} curvature},
   JOURNAL = {Int. Math. Res. Not. IMRN},
  FJOURNAL = {International Mathematics Research Notices. IMRN},
    VOLUME = {2015},
      YEAR = {2015},
    NUMBER = {19},
     PAGES = {9775--9791},
      ISSN = {1073-7928,1687-0247},
   MRCLASS = {53A30 (35J08 53C20)},
  MRNUMBER = {3431611},
MRREVIEWER = {Ali\ Maalaoui},
       DOI = {10.1093/imrn/rnu247},
       URL = {https://doi.org/10.1093/imrn/rnu247},
}

@incollection {HY-Lecture,
    AUTHOR = {Hang, Fengbo and Yang, Paul C.},
     TITLE = {Lectures on the fourth-order {$Q$} curvature equation},
    EDITOR = {Han, Fei and Xu, Xingwang and Zhang, Weiping},
 BOOKTITLE = {Geometric analysis around scalar curvatures},
    SERIES = {Lect. Notes Ser. Inst. Math. Sci. Natl. Univ. Singap.},
    VOLUME = {31},
     PAGES = {1--33},
 PUBLISHER = {World Sci. Publ., Hackensack, NJ},
      YEAR = {2016},
      ISBN = {978-981-3100-54-1},
       DOI = {10.1142/9789813100558_0001},
       URL = {https://doi.org/10.1142/9789813100558_0001},
   MRCLASS = {53A30 (35J61 35R01 58J05)},
  MRNUMBER = {3618119},
MRREVIEWER = {Yongbing\ Zhang},
}

@article {MV,
    AUTHOR = {Mazumdar, Saikat and V\'etois, J\'er\^ome},
     TITLE = {Existence results for the higher-order {$Q$}-curvature
              equation},
   JOURNAL = {Calc. Var. Partial Differential Equations},
  FJOURNAL = {Calculus of Variations and Partial Differential Equations},
    VOLUME = {63},
      YEAR = {2024},
    NUMBER = {6},
     PAGES = {Paper No. 151, 29},
      ISSN = {0944-2669,1432-0835},
   MRCLASS = {53C18 (35A01 35G20 35J30 35J35 35R01)},
  MRNUMBER = {4765816},
MRREVIEWER = {Zhongyuan\ Liu},
       DOI = {10.1007/s00526-024-02757-x},
       URL = {https://doi.org/10.1007/s00526-024-02757-x},
}

@article {GHL,
    AUTHOR = {Gursky, Matthew J. and Hang, Fengbo and Lin, Yueh-Ju},
     TITLE = {Riemannian manifolds with positive {Y}amabe invariant and
              {P}aneitz operator},
   JOURNAL = {Int. Math. Res. Not. IMRN},
  FJOURNAL = {International Mathematics Research Notices. IMRN},
    VOLUME = {2016},
      YEAR = {2016},
    NUMBER = {5},
     PAGES = {1348--1367},
      ISSN = {1073-7928,1687-0247},
   MRCLASS = {53A30 (58J05)},
  MRNUMBER = {3509928},
MRREVIEWER = {Ali\ Maalaoui},
       DOI = {10.1093/imrn/rnv176},
       URL = {https://doi.org/10.1093/imrn/rnv176},
}

@misc{Li,
  author        = {Li, Mingxiang},
  title         = {On the positivity of {Yamabe} invariant and {Paneitz} operator},
  howpublished  = {arXiv:2608.09279},
  year          = {2026},
  eprint        = {2608.09279},
  archivePrefix = {arXiv},
  primaryClass  = {math.DG},
  doi           = {10.48550/arXiv.2608.09279},
  url           = {https://arxiv.org/abs/2608.09279},
}

@misc{GKW,
  author        = {Gong, Liuwei and Kim, Seunghyeok and Wei, Juncheng},
  title         = {Compactness and non-compactness theorems of the fourth-
                   and sixth-order constant {$Q$}-curvature problems},
  howpublished  = {arXiv:2502.14237},
  year          = {2025},
  eprint        = {2502.14237},
  archivePrefix = {arXiv},
  primaryClass  = {math.AP},
  doi           = {10.48550/arXiv.2502.14237},
  url           = {https://arxiv.org/abs/2502.14237},
}

@misc{GLW,
  author        = {Gong, Liuwei and Lee, Sanghoon and Wei, Juncheng},
  title         = {Global convergence of the {Gursky--Malchiodi}
                   {$Q$}-curvature flow},
  howpublished  = {arXiv:2602.04267},
  year          = {2026},
  eprint        = {2602.04267},
  archivePrefix = {arXiv},
  primaryClass  = {math.DG},
  doi           = {10.48550/arXiv.2602.04267},
  url           = {https://arxiv.org/abs/2602.04267},
}

@article {LX,
    AUTHOR = {Li, YanYan and Xiong, Jingang},
     TITLE = {Compactness of conformal metrics with constant
              {$Q$}-curvature. {I}},
   JOURNAL = {Adv. Math.},
  FJOURNAL = {Advances in Mathematics},
    VOLUME = {345},
      YEAR = {2019},
     PAGES = {116--160},
      ISSN = {0001-8708,1090-2082},
   MRCLASS = {53A30 (53C20)},
  MRNUMBER = {3899029},
MRREVIEWER = {Gang\ Li},
       DOI = {10.1016/j.aim.2019.01.020},
       URL = {https://doi.org/10.1016/j.aim.2019.01.020},
}

@article {Ligang,
    AUTHOR = {Li, Gang},
     TITLE = {A compactness theorem on {B}ranson's {$Q$}-curvature equation},
   JOURNAL = {Pacific J. Math.},
  FJOURNAL = {Pacific Journal of Mathematics},
    VOLUME = {302},
      YEAR = {2019},
    NUMBER = {1},
     PAGES = {119--179},
      ISSN = {0030-8730,1945-5844},
   MRCLASS = {53C21 (35B50 35J61 35R01 53A30)},
  MRNUMBER = {4028770},
MRREVIEWER = {Changwei\ Xiong},
       DOI = {10.2140/pjm.2019.302.119},
       URL = {https://doi.org/10.2140/pjm.2019.302.119},
}

@article {HR,
    AUTHOR = {Hebey, Emmanuel and Robert, Fr\'ed\'eric},
     TITLE = {Coercivity and {S}truwe's compactness for {P}aneitz type
              operators with constant coefficients},
   JOURNAL = {Calc. Var. Partial Differential Equations},
  FJOURNAL = {Calculus of Variations and Partial Differential Equations},
    VOLUME = {13},
      YEAR = {2001},
    NUMBER = {4},
     PAGES = {491--517},
      ISSN = {0944-2669,1432-0835},
   MRCLASS = {58J60},
  MRNUMBER = {1867939},
       DOI = {10.1007/s005260100084},
       URL = {https://doi.org/10.1007/s005260100084},
}

@misc{MP,
  author        = {Mazumdar, Saikat and Premoselli, Bruno},
  title         = {Compactness of conformal metrics with constant
                   {$Q$}-curvature of higher order},
  howpublished  = {arXiv:2510.00888},
  year          = {2025},
  eprint        = {2510.00888},
  archivePrefix = {arXiv},
  primaryClass  = {math.AP},
  doi           = {10.48550/arXiv.2510.00888},
  url           = {https://arxiv.org/abs/2510.00888},
}

@article {Juhl,
    AUTHOR = {Juhl, Andreas},
     TITLE = {Explicit formulas for {GJMS}-operators and {$Q$}-curvatures},
   JOURNAL = {Geom. Funct. Anal.},
  FJOURNAL = {Geometric and Functional Analysis},
    VOLUME = {23},
      YEAR = {2013},
    NUMBER = {4},
     PAGES = {1278--1370},
      ISSN = {1016-443X,1420-8970},
   MRCLASS = {53A30 (05A19 35R01 53A55 53B20 53C25 58J50)},
  MRNUMBER = {3077914},
MRREVIEWER = {A.\ Rod\ Gover},
       DOI = {10.1007/s00039-013-0232-9},
       URL = {https://doi.org/10.1007/s00039-013-0232-9},
}

@article {LiXu,
    AUTHOR = {Li, Mingxiang and Xu, Xingwang},
     TITLE = {On positivity of the {$Q$}-curvatures of conformal metrics},
   JOURNAL = {J. Funct. Anal.},
  FJOURNAL = {Journal of Functional Analysis},
    VOLUME = {289},
      YEAR = {2025},
    NUMBER = {8},
     PAGES = {Paper No. 111011, 23},
      ISSN = {0022-1236,1096-0783},
   MRCLASS = {35K91 (53C18)},
  MRNUMBER = {4896811},
MRREVIEWER = {Rong\ Zhang},
       DOI = {10.1016/j.jfa.2025.111011},
       URL = {https://doi.org/10.1016/j.jfa.2025.111011},
}

@misc{VZ,
  author        = {V{\'e}tois, J{\'e}r{\^o}me and Zeitler, Samuel},
  title         = {Positivity and non-positivity results for the sixth-order
                   {$Q$}-curvature of conformal metrics in {$\mathbb R^n$}},
  howpublished  = {arXiv:2607.18205},
  year          = {2026},
  eprint        = {2607.18205},
  archivePrefix = {arXiv},
  primaryClass  = {math.DG},
  doi           = {10.48550/arXiv.2607.18205},
  url           = {https://arxiv.org/abs/2607.18205},
}

@article {CG,
    AUTHOR = {Case, Jeffrey S. and Gover, A. Rod},
     TITLE = {The {GJMS} operators in geometry, analysis and physics},
   JOURNAL = {J. Lond. Math. Soc. (2)},
  FJOURNAL = {Journal of the London Mathematical Society. Second Series},
    VOLUME = {113},
      YEAR = {2026},
    NUMBER = {1},
     PAGES = {Paper No. e70375, 21},
      ISSN = {0024-6107,1469-7750},
   MRCLASS = {53C18 (32V05 35Q40 53C21 58J70)},
  MRNUMBER = {5011581},
       DOI = {10.1112/jlms.70375},
       URL = {https://doi.org/10.1112/jlms.70375},
}

@article {FWX,
    AUTHOR = {Fazly, Mostafa and Wei, Juncheng and Xu, Xingwang},
     TITLE = {A pointwise inequality for the fourth-order {L}ane-{E}mden
              equation},
   JOURNAL = {Anal. PDE},
  FJOURNAL = {Analysis \& PDE},
    VOLUME = {8},
      YEAR = {2015},
    NUMBER = {7},
     PAGES = {1541--1563},
      ISSN = {2157-5045,1948-206X},
   MRCLASS = {35J30 (35B08 35B45 35B50 35B65 35J91 35R45 53C21)},
  MRNUMBER = {3399131},
MRREVIEWER = {Yoichi\ Miyazaki},
       DOI = {10.2140/apde.2015.8.1541},
       URL = {https://doi.org/10.2140/apde.2015.8.1541},
}
\endgroup

\end{document}